\documentclass{article}
\usepackage{exptex}
\usepackage{expthmi}
\usepackage{xcolor}
\numberwithin{equation}{section}
\usepackage{enumerate}
\usepackage[numbers]{natbib}
\setcitestyle{numbers}

\newtheorem{thm}{Theorem}
\newtheorem{lem}[thm]{Lemma}

\begin{document}
\title{Discrete Operational  Calculus in  Delayed Stochastic Games}
\author{Jewgeni H. Dshalalow\\
{\color{blue}eugene@fit.edu}
\and
Kenneth Iwezulu\\
{\color{blue}kiwezulu2012@my.fit.edu}
\and
Ryan T. White\\
{\color{blue}rwhite2009@fit.edu}}
\date{}
\maketitle

\vspace{-1cm}
\begin{center}
Department of Mathematical Sciences\\
College of Science\\
Florida Institute of Technology\\
Melbourne, Florida 32901, USA
\end{center}

\begin{abstract}
This article deals with classes of antagonistic games with two players. A game is specified in terms of two ``hostile'' stochastic processes representing mutual attacks upon random times exerting casualties of random magnitudes. The game ends when one of the players is defeated. We target the first passage time $\tau_\rho$ of the defeat and the amount of casualties to either player upon $\tau_\rho$. Here we validate our claim of analytic tractability of the general formulas obtained in \cite{Agarwal:2005uq} under various transforms.

\textbf{Keywords}: Noncooperative stochastic games, marked point processes, Poisson process, fluctuation theory, ruin time, exit time, first passage time, modified Bessel functions.

\textbf{AMS Subject Classification}: 82B41, 60G51, 60G55, 60G57, 91A10, 91A05, 91A60, 60K05.
\end{abstract}

\section{Introduction}
In this paper we model purely antagonistic stochastic games of two players, A and B, who periodically attack each other according to two independent marked random measures
\begin{align}
\mathcal{A}:=\sum_{j\geq 1}w_j\varepsilon_{s_j},\hspace{1cm}\text{ and  }\hspace{1cm} \mathcal{B}:=\sum_{k\geq 1}z_k\varepsilon_{t_k},
\end{align}
where $s_1>0$, $t_1>0$.

The game evolves as a mutual conflict involving two players A and B hitting each other at random until one of the players is ``exhausted.'' In short, the players attack each other in accordance with two independent marked point processes $\mathcal{A}$ and $\mathcal{B}$ of (1.1) on a probability space $\left(\Omega,\mathcal{F},P\right)$, where $\varepsilon_a$ is the Dirac point mass at point $a\in\mathbb{R}$, $\sum_{j\geq 1}\varepsilon_{s_j}$ and $\sum_{k\geq 1}\varepsilon_{t_k}$ are underlying point random measures representing the times of attacks, and the marks $\{w_j\}$ and $\{z_k\}$ (nonnegative random variables) represent respective amounts of damage dealt to players A and B. Players A and B can sustain the attacks until their respective cumulative casualties cross thresholds $M$ and $N$ (positive real numbers). At a time when it takes place (called the first passage time), i.e. when one of the players loses the game, the game should formally stop.

However, the game is observed upon random epochs of time $\mathcal{T}=\{\tau_1,\tau_2,\ldots\}$ and the outcome of the game is not known in real time. The first passage time is then shifted to an epoch $\tau_\rho$, i.e. upon one of the observation instants of time. Thus, the narrative of the game is delayed allowing the players to continue fighting each other beyond their assumed merits of endurance, thereby letting the game to follow the path of a more realistic scenario.

In the sequel, we make assumptions on $\mathcal{A}$, $\mathcal{B}$, as being Poisson marked processes and $\mathcal{T}$ being a Poisson point process. If $X_i$ and $Y_i$ are casualties to players A and B over the interval $(\tau_{i-1},\tau_i]$, and observed upon $\tau_i$, then
\begin{align}
A_k=X_0+X_1+\dots+X_k,\,\,B_k=Y_0+Y_1+\ldots+Y_k
\end{align}
are the cumulative damages to players A and B by time $\tau_k$. With the exit indices
\begin{align}
&\nu_1:=\inf\{j\geq 0:A_j=X_0+X_1+\dots+X_j\geq
M\},\\
&\nu_2:=\inf\{k\geq 0:B_k=Y_0+Y_1+\dots+Y_k\geq
N\},\\
&\rho:=\min\left\{\nu_1,\nu_2\right\},
\end{align}
the random time $\tau_\rho$ is the observed first passage time or the observed ruin time or the observed exit time from the game. We recall that the real ruin time is unknown and it takes place anywhere between $\tau_{\rho-1}$ (observed pre-exit time) and $\tau_\rho$. Obviously, the finer are the observation times, the shorter is a delay of the end of the game. The other information of interest are $A_\rho$ and $B_\rho$ being the total damages to players A and B upon the ruin time. Clearly, $A_\rho\geq M$ or $B_\rho\geq N,$ whereas $A_{\rho-1}<M$ and $B_{\rho-1}\leq N$.

In this paper we seek the joint transforms
\begin{align}
\varPhi\left(u,v,\theta\right)=Eu^{A_\rho}v^{B_\rho}e^{-\theta\tau_\rho},\,\left\|u\right\|\leq 1,\left\|v\right\|\leq 1,\text{Re }\theta\geq 0
\end{align}
or
\begin{align}
\varPhi\left(u,v,\theta\right)=E\left[e^{-uA_\rho}v^{B_\rho}e^{-\theta\tau_\rho}\right],\,\text{Re }u\geq 0,\left\|v\right\|\leq 1,\text{Re }\theta\geq 0
\end{align}

The first transform is suited for discrete-valued components due to integer-valued marks $\{w_j\}$ and $\{z_k\}$, whereas the second transform accounts to the mixed case to be discussed next.

A method of finding $\varPhi$ was suggested by \citet{Agarwal:2005uq} in which the authors treated a multivariate marked point process with mutually dependent marks of which exactly two were so-called active. The latter means that the cumulative marks identified as active are to cross thresholds (such as $M$ and $N$ previously introduced) which bring the entire process to a hold upon crossing at the first passage time, whereas the rest of the marks identified as passive just assumes their respective values. One of them is the first passage time $\tau_\rho$. Although functional $\varPhi$ is a special case of a more general functional in \cite{Agarwal:2005uq} (that was not related to a game), we want to demonstrate the actual use of some discrete operators proposed in \cite{Agarwal:2005uq} and not only that. We also want to show that the mathematical outcome of the game is analytically tractable and numerically tame.

The following result is due to \cite{Agarwal:2005uq} in its special and a largely abridged form.

\begin{thm}[\citet{Agarwal:2005uq}] Under the assumptions (1.1)-(1.6), the functional $\varPhi$ of the process can be expressed through
\begin{align}
\gamma\left(u,v,\theta\right)=E\left[u^{X_1}v^{Y_1}e^{-\theta\tau_1}\right]
\end{align}
and it satisfies the following formula
\begin{align}
\varPhi\left(u,v,\theta\right)&=E\left[u^{A_\rho}v^{B_\rho}e^{-\theta\tau_\rho}\right]\notag
\\&=1-[1-\gamma(u,v,\theta)]\mathcal{D}_{xy}^{M-1,\,N-1}\left\{\frac{1}{1-\gamma(ux,vy,\theta)}\right\}
\end{align}

where the operator $\mathcal{D}$ (applied to a function
$\varphi:\mathbb{C}^3\rightarrow\mathbb{C}$ analytic at $(x,y)=(0,0)$) defined as

\begin{align}
\mathcal{D}^{k,m}_{x,y}\varphi\left(x,y,z\right)=\left\{
\begin{matrix}
\lim\limits_{x\rightarrow 0,y\rightarrow
0}\frac{1}{k!m!}\frac{\partial^{k+m}}{\partial x^k\partial
y^m}\left[\frac{\varphi\left(x,y,z\right)}{\left(1-x\right)\left(1-y\right)}\right],&k,m\geq 0\\
0,&k<\text{otherwise}
\end{matrix}
\right.
\end{align}
\end{thm}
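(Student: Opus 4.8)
The plan is to exploit the i.i.d.\ increment structure that the Poisson assumptions (1.1)--(1.6) force: with $\tau_0=0$ and $X_0=Y_0=0$, the triples $(X_i,Y_i,\tau_i-\tau_{i-1})$ are independent and identically distributed with common transform $\gamma$, so that $E[u^{A_n}v^{B_n}e^{-\theta\tau_n}]=\gamma(u,v,\theta)^n$ and, after tagging the running sums with auxiliary variables $x,y$,
\begin{align}
\gamma(ux,vy,\theta)^n = E\left[u^{A_n}v^{B_n}x^{A_n}y^{B_n}e^{-\theta\tau_n}\right].\notag
\end{align}
First I would read off the action of the operator. Since $\mathcal{D}^{k,m}_{x,y}\varphi$ is, by (1.10), the coefficient of $x^k y^m$ in $\varphi(x,y,\cdot)/[(1-x)(1-y)]$, and division by $(1-x)(1-y)$ replaces a coefficient array by its two-dimensional partial sums, the operator $\mathcal{D}^{M-1,N-1}_{x,y}$ extracts $\sum_{i\le M-1}\sum_{j\le N-1}$ of the Taylor coefficients of its argument. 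Expanding $1/[1-\gamma(ux,vy,\theta)]=\sum_{n\ge0}\gamma(ux,vy,\theta)^n$ as a power series in $(x,y)$ and matching the coefficient of $x^iy^j$ with the contribution of the event $\{A_n=i,\,B_n=j\}$, I would conclude
\begin{align}
\mathcal{D}^{M-1,N-1}_{x,y}\left\{\frac{1}{1-\gamma(ux,vy,\theta)}\right\} = \sum_{n\ge0}E\left[u^{A_n}v^{B_n}e^{-\theta\tau_n}\mathbf{1}_{\{A_n<M,\,B_n<N\}}\right] =: G(u,v,\theta),\notag
\end{align}
where, because the integer-valued $A_n,B_n$ are nondecreasing, the rectangle $\{A_n\le M-1,\,B_n\le N-1\}$ is exactly the survival event $\{\rho>n\}$.

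It then remains to show $\varPhi = 1-[1-\gamma]G$, which I would obtain by a telescoping/renewal argument. Writing $\mathbf{1}_{\{\rho=n\}}=\mathbf{1}_{\{\rho\ge n\}}-\mathbf{1}_{\{\rho>n\}}$ and setting $H_n:=E[u^{A_n}v^{B_n}e^{-\theta\tau_n}\mathbf{1}_{\{\rho>n\}}]$ (so $G=\sum_{n\ge0}H_n$), the key observation is that the survival event $\{\rho\ge n\}=\{A_{n-1}<M,\,B_{n-1}<N\}$ is measurable with respect to the first $n-1$ increments, while the last factor $u^{X_n}v^{Y_n}e^{-\theta(\tau_n-\tau_{n-1})}$ is independent of them with mean $\gamma$. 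Hence $E[u^{A_n}v^{B_n}e^{-\theta\tau_n}\mathbf{1}_{\{\rho\ge n\}}]=\gamma\,H_{n-1}$ for $n\ge1$, whereas the $n=0$ term equals $1$. Summing the telescoped decomposition and collapsing,
\begin{align}
\varPhi = \left(1-H_0\right)+\sum_{n\ge1}\left(\gamma H_{n-1}-H_n\right) = 1+\gamma G-G = 1-[1-\gamma(u,v,\theta)]\,G,\notag
\end{align}
which is precisely (1.9) once $G$ is replaced by the operator expression above.

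The main obstacle I anticipate is analytic rather than combinatorial. I must ensure the geometric expansion $1/[1-\gamma(ux,vy,\theta)]=\sum_n\gamma(ux,vy,\theta)^n$ is legitimate, i.e.\ that $|\gamma(ux,vy,\theta)|<1$ on a neighborhood of $(x,y)=(0,0)$, so the argument of $\mathcal{D}$ is genuinely analytic there, and then justify interchanging the differentiation and limit defining $\mathcal{D}$ with the infinite sum and with the expectation. This I would do through uniform convergence of the series of analytic functions on a polydisc together with Fubini or dominated convergence, valid because $\|u\|,\|v\|\le1$ and $\mathrm{Re}\,\theta\ge0$ keep every summand bounded by $|\gamma(ux,vy,\theta)|^n$. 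A secondary point is the bookkeeping of the initial term: I would confirm that $A_0=B_0=0$ and $\tau_0=0$, so the $n=0$ summand contributes exactly $1$ and no spurious prefactor survives. Finally, for the mixed transform (1.7), where $A_\rho$ enters continuously through $e^{-uA_\rho}$, the same scheme applies verbatim to the still-discrete $B$-component via $y$, the $x$-tagging being retained only to encode crossing of the integer threshold $M$; I would note this parallel but carry out the computation in the fully discrete case (1.6).
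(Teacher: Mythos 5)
The paper itself gives no proof of this theorem: it is imported verbatim (in ``abridged form'') from the cited reference \cite{Agarwal:2005uq}, so there is no in-paper argument to compare yours against. On its own merits your derivation is correct and is essentially the standard fluctuation-theoretic route to such formulas. The two pillars both hold: (a) the operator $\mathcal{D}^{M-1,N-1}_{x,y}$ is coefficient extraction composed with partial summation, so applied to $\sum_n \gamma(ux,vy,\theta)^n=\sum_n E[u^{A_n}v^{B_n}x^{A_n}y^{B_n}e^{-\theta\tau_n}]$ it returns $G=\sum_n E[u^{A_n}v^{B_n}e^{-\theta\tau_n}\mathbf{1}_{\{A_n<M,B_n<N\}}]$, and by monotonicity of the integer-valued cumulative casualties the rectangle event is $\{\rho>n\}$; (b) the telescoping $\mathbf{1}_{\{\rho=n\}}=\mathbf{1}_{\{\rho>n-1\}}-\mathbf{1}_{\{\rho>n\}}$ together with independence of the $n$-th increment from $\{\rho>n-1\}$ gives $E[u^{A_n}v^{B_n}e^{-\theta\tau_n}\mathbf{1}_{\{\rho\ge n\}}]=\gamma H_{n-1}$ and hence $\varPhi=1-(1-\gamma)G$. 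Your handling of the $n=0$ term correctly identifies the hidden normalization in the abridged statement ($X_0=Y_0=0$, $\tau_0=0$, so $H_0=1$ and $\rho>0$ a.s.); in the unabridged original there is an extra initial-observation factor $\gamma_0$, which is exactly what your bookkeeping would produce if $X_0,Y_0,\tau_0$ were nontrivial. Two small caveats: the argument as written needs $M$ and $N$ to be integers for $\{A_n<M\}=\{A_n\le M-1\}$ to match the derivative orders $M-1,N-1$ (the paper loosely calls them positive reals), and your closing remark that the scheme applies ``verbatim'' to the mixed transform (1.7) is too quick, since a continuous $A_\rho$ requires a Laplace--Carson-type inversion operator in place of the discrete $\mathcal{D}^{M-1}_x$; neither affects the validity of your proof of (1.9).
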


\section{Motivation}

The class of antagonistic games which we study occur almost in every sphere of life. The following are some examples of games pertinent to our models.

\textbf{Cancer Treatment}. Some cancers are curable while others are not. Most metastatic cancer (which spreads from a primary site to other parts of the
body over lymph nodes and blood vessels) are incurable but can be managed to some extent using radiation alone or with other forms of treatment like chemotherapy. In relation to the antagonistic games, an oncologist, along with his/her treatment, can be regarded as player A while the tumor - as player B. The oncologist attacks the tumor cells with radiation and/or chemotherapy. While the tumor can shrink under the treatment, it may also continue spreading to other parts of the body (metastasize). Notice that any treatment by itself always has side effects (such as weakening immune system) that can be regarded as a collateral damage. At some point, when the cancer continues to spread and thus the body does not respond to the treatment, unless there are alternative options, player A is defeated. On the other hand, if the body well responds to the treatment and the tumor vanishes (the state of remission), we declare that player B is defeated. In a more modest form of a defeat, the tumor can shrink or significantly shrink instead of  disappearing entirely.

Note that cancer cells like bacteria cells typically divide in two progeny and they initially evolve as a deterministic branching process. However, some cancer cells are eliminated by T-killer cells, and at some point, when cancer matures, it evolves not from a single but many cells. If we also take into consideration mutations exhibiting an increase of the number of chromosomes (beginning in 46 to 64 and further), on an early stage, the general tumor development becomes rather chaotic allowing us to model it by an independent and stationary increment process.

Since in this paper the ``nature'' of attacks goes to integer-valued increments, and since we work on entirely discrete operational calculus, we would like to emphasize why some applications can contain entirely discrete components (or at worst they can be approximated by units made arbitrarily
small).\hfill{\qed}

\textbf{Global Military Warfare}. This is a situation where a country or group of countries are at war with one another under military operations. One classical example is the war between the United States and Japan during the WWII which consisted of multiple phases \cite{DshalalowHuang2009}. Phase 1 began with economic sanctions imposed on Japan by the US in 1940-1941 due to Japan's aggression in Manchuria. Japan tried to negotiate with the US (apparently until November 26 of 1941), but the concessions offered by the Japanese were not satisfactory to the US, and Japan not wishing to give in had no other choice as to strike on December 7, 1941. This corresponds to the beginning of phase 2. Undoubtedly, Japan was not ruined economically, but it was significantly crippled (being deprived of steel and oil, to name a few). At the same time Japan did not want to stop her campaign in China, which the US chose not to tolerate, also fearing Japan's further expansion. The Japanese Pearl Harbor attack followed by their Pacific campaign is yet another intermediate phase prior to a full scale war, because Japan believed the US will be deterred from further actions under the inflicted casualties and loss of territories in the Pacific. \hfill{\qed}

\textbf{Global Economic Warfare}. A recent economic confrontation between the US/Europe and Russia is an antagonistic game. Here player A will be the  US/Europe while player B is Russia. US and Europe stroke Russia with numerous sanctions in an attempt to weaken its economy and to drive Russia out of Ukraine, while Russia reciprocated with their own sanctions (such as forbidding US and Canada officials from entering Russia and adopting a ban on fruit, vegetables, fish, meat, and dairy products from the US and Europe) to counter such attacks. \hfill{\qed}

\textbf{Corporate Economic Hostilities}. Here we refer to a hostile relationship between two or more corporations which have a similar goal or offer similar services. In particular, we consider ride sharing companies (examples include Uber and Lyft) and taxi cabs (such as the Yellow Cab which is a sole licensed taxi cab company in Long Beach city). Uber in recent times have had to reduce their fares for their riders and this has brought about a drift of riders from Yellow Cab to Uber while they also make use of recent and flashy cars to attract its riders and make them feel more comfortable compared to Yellow Cab. This move by Uber is some form of attack on Yellow Cab which seems to be working as Uber gains more riders defecting from Yellow Cab riders. In turn, Yellow Cab attacks Uber now by calling on the authorities to make ride sharing companies face the same regulatory burdens as they do. While at the same time they are working with city councils to remove taxi's fare floor, discount fares as condition warrants, provide an ordering applications as well as getting a new branding identity. \hfill{\qed}

\textbf{Existing Literature}. The idea of utilizing multivariate random walk processes in stochastic games goes back to \citet{Agarwal:2005uq} and various earlier work of Dshalalow (see a related bibliography therein). Variants of stochastic games were studied in papers \cite{DshalalowHuang2009, DshalalowKe2009, DshalalowTreerattrakoon2008, DshalalowTreerattrakoon2010} by the first author and his collaborators, among them - games with coalitions \cite{DshalalowTreerattrakoon2008}. There were several efforts to apply formulas in \cite{Agarwal:2005uq}, such as Theorem 1 and alike, with different degree of success. One of them was Dshalalow and Treerattrakoon \cite{DshalalowTreerattrakoon2010} with continuous operational calculus. Unlike traditional methods in operational calculus and special functions, in this paper we open a new avenue of discrete operational calculus utilizing discrete inverse formulas for a class of bivariate operators $\mathcal{D}$ introduced in (1.10) which we explore in section 3. Such tools are non-existent in the literature except for a few scattered results in articles by the first author and his collaborators. We manage to obtain a fully tractable formula for the joint functional $\varPhi$ of three dependent components of the game, the first observed passage time $\tau_\rho$ and cumulative casualties of the players. We also obtain explicitly the marginal probability density function of $\tau_\rho$.

The subject of our modeling is entirely focused on fully antagonistic games which are popular in game theory. They cover a range of applications in economics \cite{Bagchi1984, BasarOlsder1982, Dockner2000, Fishburn1978, Jorgensen2004, Konstantinov2004, Shashikin2004}, warfare  \cite{Ardema1987, BasarOlsder1982, Isaacs1999, SegalMiloh1999, Shima2005}, and biology \cite{PerryRoitberg2005} to name a few. The methodology we use is based on fluctuation theory of stochastic processes as in, e.g. \cite{Agarwal:2005uq, AlzahraniDshalalow2011, Redner2001}. 

\section{A Special Case with Discrete Components}

Notice that in most applications, the functional $\gamma\left(u,v,\theta\right)=E\left[u^{X_1}v^{Y_1}e^{-\theta\tau_1}\right]$ can be readily found, as it is in our case. Let us assume that the mutual attacks on players A and B follow in accordance with two independent ordinary Poisson processes $\mathcal{A}$ and $\mathcal{B}$ specified in (1.1) of intensities $\lambda$ and $\mu$. 

Since $\mathcal{A}$ and $\mathcal{B}$ are ordinary, the respective marks $w_j$'s and $z_k$'s are 1 a.s. Furthermore, the observations take place at times $\tau_1,\tau_2,\ldots$ that forms a renewal process, with inter-renewal times $\Delta_1=\tau_1,\Delta_2=\tau_2-\tau_1,\ldots\in\left[\Delta\right],$ i.e., being identically distributed with the common Laplace-Stieltjes transform
\begin{align}
\gamma\left(\theta\right)=E\left[e^{-\theta\Delta}\right].
\end{align}

In this case, since $X$ and $Y$ are conditionally independent given $\Delta$,
\begin{align}
\gamma\left(u,v,\theta\right)=Eu^{X_1}v^{Y_1}e^{-\theta\tau_1}&=E\left[E\left[u^{X_1}v^{Y_1}e^{-\theta\tau_1}\Delta\right]\right]\notag
\\&=E\left[e^{-\theta\Delta}E\left[u^{X_1}\Delta\right]E\left[v^{Y_1}\Delta\right]\right]\notag
\\&=E\left[e^{-\theta\Delta}e^{\lambda\Delta\left(u-1\right)}e^{\mu\Delta%
\left(v-1\right)}\right]\notag
\\&=\gamma\left[\theta+\lambda-\lambda u+\mu-\mu
v\right]
\end{align}

In the special case when $\Delta\in\left[\text{Exp}\left(\gamma\right)\right]$ (exponentially distributed with parameter $\gamma$), from (3.2) we have
\begin{align}
\gamma\left(u,v,\theta\right)=\frac{\gamma}{\gamma+\lambda(1-u)+\mu(1-v)+\theta}
\end{align}
and thus
\begin{align}
\frac{1}{1-\gamma(u,v,\theta)}=1+\frac{\gamma}{\lambda(1-u)+\mu(1-v)+\theta}.
\end{align}

Now we are going to use the following properties of the $\mathcal{D}$-operator  \cite{AlzahraniDshalalow2011, DshalalowSPN2015}.

\begin{thm}[\citet{DshalalowSPN2015}]
The following properties hold true of the $\mathcal{D}$-operator introduced in (1.10).
\begin{enumerate}[(i)]
\item $\mathcal{D}_{x,y}^{k,m}=\mathcal{D}^k_x\circ              \mathcal{D}^m_y=\mathcal{D}^m_y\circ\mathcal{D}^k_x$
\item $\mathcal{D}$ is a linear functional with $\mathcal{D}_x\left\{\mathbf{1}\left(x\right)\right\}=1$, where $\mathbf{1}\left(x\right)=1$ for
all $x\in\mathbb{R}$.
\item $\mathcal{D}^k_x\left\{x^jg(x)\right\}=\mathcal{D}^{k-j}_x\{g(x)\}$.
\item For any real number $b$ it holds true that\\
$\mathcal{D}^k_x\left\{\frac{1}{1-bx}\right\}=%
\left\{
\begin{matrix}
\frac{1-b^{k+1}}{1-b},&b\neq 1\\
k+1,&b=1
\end{matrix}
\right.$
\item For any real number $a$ and for a positive integer $n$,
\begin{align*}
\mathcal{D}^k_x\left\{\frac{1}{\left(1-ax\right)^n}\right\}=\left\{
\begin{matrix}
\sum_{j=0}^k\binom{n+j-1}{j}a^j,&(a,n)\neq(1,1) \\
k+1,&(a,n)=(1,1)
\end{matrix}
\right.
\end{align*}
\item For two real numbers $a$ and $b$ it holds
\begin{align*}
\mathcal{D}^k_x\left\{\frac{1}{1-bx}\frac{1}{\left(1-ax%
\right)^n}\right\}=\left\{
\begin{matrix}
\frac{1}{1-b}\,\sum_{j=0}^k\binom{n+j-1}{j}\left(a^j-b^{k+1}\left(\frac{a}{b}%
\right)^j\right),&b\neq 1\\
\sum_{j=0}^k\binom{n+j-1}{j}a^j\left(k-j+1\right),&b=1
\end{matrix}
\right.
\end{align*}
\end{enumerate}
\end{thm}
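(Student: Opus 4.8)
The plan is to reduce all six properties to a single observation about Taylor coefficients, after which each item becomes an elementary power-series manipulation. First I would establish a master identity for the one-variable operator $\mathcal{D}^k_x\varphi(x)=\lim_{x\to 0}\frac{1}{k!}\partial_x^k\bigl[\tfrac{\varphi(x)}{1-x}\bigr]$: if $\varphi$ is analytic at $0$ with expansion $\varphi(x)=\sum_{j\geq 0}a_j x^j$, then
\[
\mathcal{D}^k_x\varphi(x)=\sum_{j=0}^{k}a_j.
\]
This holds because multiplying by $\frac{1}{1-x}=\sum_{\ell\geq 0}x^\ell$ produces, via the Cauchy product, the series $\sum_{k\geq 0}\bigl(\sum_{j=0}^{k}a_j\bigr)x^k$, and the operator $\frac{1}{k!}\lim_{x\to 0}\partial_x^k$ reads off precisely the coefficient of $x^k$. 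Analyticity guarantees convergence in a neighborhood of the origin, so every term-by-term operation below is legitimate. In words, $\mathcal{D}^k_x$ returns the partial sum of the first $k+1$ Taylor coefficients.

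With this interpretation, properties (i) and (ii) are immediate. For (i), writing $\varphi(x,y,z)=\sum_{i,j}a_{ij}(z)x^i y^j$, the weight $\frac{1}{(1-x)(1-y)}$ factors across the two variables and mixed partials commute, so $\mathcal{D}^{k,m}_{x,y}$ computes the double partial sum $\sum_{i=0}^{k}\sum_{j=0}^{m}a_{ij}$ regardless of the order in which the two one-dimensional operators act. For (ii), linearity is inherited directly from the linearity of differentiation and coefficient extraction, and applying the master identity to $\mathbf{1}(x)$, whose only nonzero coefficient is $a_0=1$, gives $\mathcal{D}^k_x\{\mathbf 1\}=1$ for every $k\geq 0$.

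Properties (iii), (iv), (v) then follow by reading off the relevant coefficients. For (iii), if $g(x)=\sum_m b_m x^m$, then $x^j g(x)$ has coefficients shifted by $j$, so $\mathcal{D}^k_x\{x^j g\}=\sum_{m=0}^{k-j}b_m=\mathcal{D}^{k-j}_x\{g\}$; when $k<j$ the partial sum is empty and agrees with the convention that $\mathcal{D}^k_x$ vanishes for negative order. For (iv) I would use the geometric series $\frac{1}{1-bx}=\sum b^j x^j$, so the partial sum is $\frac{1-b^{k+1}}{1-b}$ for $b\neq 1$ and $k+1$ for $b=1$; for (v) the generalized binomial series $\frac{1}{(1-ax)^n}=\sum\binom{n+j-1}{j}a^j x^j$ gives the partial sum directly, with the degenerate case $(a,n)=(1,1)$ summing $k+1$ ones.

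The only item requiring genuine work is (vi), which I expect to be the main obstacle. Here I would form the Cauchy product of the two series, so that the coefficient of $x^\ell$ is $\sum_{j=0}^{\ell}\binom{n+j-1}{j}a^j b^{\ell-j}$, invoke the master identity, and interchange the order of the two finite sums to isolate the inner geometric sum $\sum_{p=0}^{k-j}b^p$. The delicate bookkeeping step, and the place where an error is most likely to creep in, is the simplification $a^j b^{k-j+1}=b^{k+1}(a/b)^j$, which converts the result into the stated closed form for $b\neq 1$; the branch $b=1$ follows by replacing each geometric sum with its length $k-j+1$. The entire theorem thus rests on the master identity, with (vi) being the sole property whose verification goes beyond a one-line coefficient reading.
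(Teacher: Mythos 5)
Your proof is correct. Note that the paper itself offers no proof of this theorem: it is imported by citation from \cite{DshalalowSPN2015}, so there is no in-text argument to compare yours against. Your master identity --- that $\mathcal{D}^k_x$ applied to $\varphi(x)=\sum_j a_j x^j$ returns the partial sum $\sum_{j=0}^k a_j$, because the weight $\frac{1}{1-x}$ effects a Cauchy-product accumulation of coefficients and $\frac{1}{k!}\lim_{x\to 0}\partial_x^k$ extracts the $k$-th coefficient --- is exactly the right lens, and it is the standard way these identities are derived in the cited source's lineage. Each of (i)--(v) then reads off correctly as you describe (including the $k<j$ convention in (iii) matching the definition's vanishing for negative order), and your computation in (vi) checks out: interchanging the two finite sums gives $\sum_{j=0}^k\binom{n+j-1}{j}a^j\sum_{p=0}^{k-j}b^p$, and the identity $a^j b^{k-j+1}=b^{k+1}(a/b)^j$ yields the stated closed form for $b\neq 1$, with the $b=1$ branch replacing each geometric sum by its length $k-j+1$. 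The argument is complete and self-contained.
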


\begin{thm}
For the special case of a discrete antagonistic game of two players, the joint functional $\varPhi$ satisfies the following formulas:
\begin{align}
\varPhi(u,v,\theta)&=\frac{\gamma}{\gamma+\lambda\left(1-u\right)+\mu\left(1-v\right)+\theta}\notag
\\&\hspace{1cm}\times\left(1-\frac{\lambda\left(1-u\right)+\mu\left(1-v\right)+\theta}{\lambda+\mu\left(1-v\right)+\theta}\psi\right)
\end{align}
\end{thm}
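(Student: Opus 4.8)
The plan is to specialize the master formula of Theorem 1 to the exponential case and then evaluate the bivariate operator $\mathcal{D}^{M-1,N-1}_{x,y}$ termwise using the identities collected in Theorem 2. Writing $c:=\lambda+\mu+\theta$ for brevity and substituting (3.4) into (1.9), the function to which the operator is applied is
\[
\frac{1}{1-\gamma(ux,vy,\theta)}=1+\frac{\gamma}{c-\lambda ux-\mu vy}.
\]
Since $\mathcal{D}$ is a linear functional with $\mathcal{D}^{M-1,N-1}_{x,y}\{\mathbf{1}\}=1$ by Theorem 2(ii), the constant summand contributes a clean $1$, and the whole computation reduces to evaluating $\mathcal{D}^{M-1,N-1}_{x,y}$ on the single rational term $\frac{\gamma}{c-\lambda ux-\mu vy}$.

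Next I would factor the operator as $\mathcal{D}^{M-1}_{x}\circ\mathcal{D}^{N-1}_{y}$, which is permitted by Theorem 2(i). Holding $x$ fixed and rewriting the term as $\frac{\gamma}{c-\lambda ux}\cdot\frac{1}{1-a y}$ with $a=\frac{\mu v}{c-\lambda ux}$, the geometric-sum identity of Theorem 2(iv) collapses the inner operator to the finite sum $\gamma\sum_{n=0}^{N-1}(\mu v)^n(c-\lambda ux)^{-(n+1)}$. I would then apply the outer operator $\mathcal{D}^{M-1}_{x}$ to each summand; after normalizing $(c-\lambda ux)^{-(n+1)}=c^{-(n+1)}(1-\tfrac{\lambda u}{c}x)^{-(n+1)}$, the power-law identity of Theorem 2(v) turns each term into $\sum_{j=0}^{M-1}\binom{n+j}{j}(\lambda u/c)^{j}$. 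Collecting both sums yields the symmetric double finite sum
\[
\mathcal{D}^{M-1,N-1}_{x,y}\left\{\frac{\gamma}{c-\lambda ux-\mu vy}\right\}=\frac{\gamma}{c}\sum_{n=0}^{N-1}\sum_{j=0}^{M-1}\binom{n+j}{j}\frac{(\lambda u)^{j}(\mu v)^{n}}{c^{\,n+j}}=:\frac{\gamma}{c}\,S.
\]

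Finally I would reassemble $\varPhi$. Setting $P:=\lambda(1-u)+\mu(1-v)+\theta$, so that $\gamma(u,v,\theta)=\frac{\gamma}{\gamma+P}$ and $1-\gamma(u,v,\theta)=\frac{P}{\gamma+P}$, the master formula reads $\varPhi=\gamma(u,v,\theta)-[1-\gamma(u,v,\theta)]\,\frac{\gamma}{c}S$; using $\frac{1-\gamma(u,v,\theta)}{\gamma(u,v,\theta)}=\frac{P}{\gamma}$ lets me pull the prefactor $\frac{\gamma}{\gamma+P}=\gamma(u,v,\theta)$ out front, leaving $\varPhi=\gamma(u,v,\theta)\bigl(1-\frac{P}{c}S\bigr)$. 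The last bookkeeping step is to note the identity $\lambda+\mu(1-v)+\theta=c-\mu v$ and to rescale, defining $\psi:=\frac{c-\mu v}{c}S$, so that $\frac{P}{c}S=\frac{P}{\lambda+\mu(1-v)+\theta}\psi$ and the bracket becomes exactly $1-\frac{\lambda(1-u)+\mu(1-v)+\theta}{\lambda+\mu(1-v)+\theta}\psi$ as claimed in (3.5). I expect the main obstacle to be precisely this iterated evaluation and reorganization: applying Theorem 2(v) to the $n$-dependent power $(1-\tfrac{\lambda u}{c}x)^{-(n+1)}$ and then regrouping the resulting double sum so that the correct normalizing constant $\lambda+\mu(1-v)+\theta$ (rather than $\lambda+\mu+\theta$) is isolated and the remaining factor coincides with $\psi$. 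One should also keep in view the degenerate branches $b=1$ of Theorem 2(iv) and $(a,n)=(1,1)$ of Theorem 2(v), although for generic values of $u,v,\theta$ these do not arise and the continuity of the transform handles the boundary cases.
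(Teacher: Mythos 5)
Your proposal is correct, but in its second half it takes a genuinely different route from the paper. Both proofs begin identically: substitute (3.4) into (1.9), split off the constant $1$ by linearity, and factor the operator as $\mathcal{D}^{M-1}_x\circ\mathcal{D}^{N-1}_y$ via Theorem 2(i), applying Theorem 2(iv) in $y$ first. At that point the paper keeps the result of the inner operator in closed form, $\frac{\gamma}{p-\mu v}\bigl[\frac{1}{1-bx}-C^N\frac{1}{1-bx}\frac{1}{(1-ax)^N}\bigr]$, and evaluates $\mathcal{D}^{M-1}_x$ with the two-parameter product identity of Theorem 2(vi), which directly produces $\psi$ in the form (3.6) with the isolated $b^M$ and $C^N$ terms (these make the marginal reductions in Examples 1--3 drop out cleanly). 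You instead expand the inner result as the finite geometric sum $\gamma\sum_{n=0}^{N-1}(\mu v)^n(c-\lambda ux)^{-(n+1)}$ and hit each summand with the single-factor identity of Theorem 2(v), arriving at the symmetric double sum $\frac{\gamma}{c}S$ with $S=\sum_{n=0}^{N-1}\sum_{j=0}^{M-1}\binom{n+j}{j}(\lambda u)^j(\mu v)^n c^{-n-j}$; this avoids Theorem 2(vi) altogether and treats $M$ and $N$ on an equal footing. Your reassembly of $\varPhi=\gamma(u,v,\theta)\bigl(1-\frac{P}{c}S\bigr)$ is correct, and since both computations evaluate the same well-defined operator, $\frac{c-\mu v}{c}S$ necessarily equals the $\psi$ of (3.6) (I checked the cases $M=N=1$ and $M=2$, $N=1$ explicitly and they agree). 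The one piece you flag but do not carry out is precisely that resummation identifying your rescaled double sum with the single-sum expression (3.6); it is routine but nonzero work, and without it you have proved an equivalent closed form of the theorem rather than the literal statement with $\psi$ as displayed in (3.6)--(3.7).
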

where
\begin{align}
\psi&=\frac{1-b^M}{1-b}-\frac{1}{1-b}\,C^N\,\,%
\sum_{j=0}^{M-1}\binom{N+j-1}{j}\left[a^j-b^M\left(\frac{a}{b}\right)^j\,%
\right]
\\a&=\frac{\lambda u}{\lambda+\mu+\theta},\hspace{1cm}b=\frac{\lambda u}{\lambda+\mu+\theta-\mu
v},\hspace{1cm}C=\frac{\mu v}{\lambda+\mu+\theta}
\end{align}
\begin{proof}
From Theorem 2$\left(i\right)$, 
\begin{align*}
\mathcal{D}_{xy}^{M-1,\,N-1}\left\{\frac{1}{1-\gamma(ux,vy,\theta)}\right\}=\mathcal{D}_x^{M-1}\left\{\mathcal{D}_y^{\,N-1}\left\{\frac{1}{1-\gamma(ux,vy,\,\theta)}\right\}\right\}.
\end{align*}
Let $p=\lambda+\mu+\theta$ then from (3.3) and Theorem 2 $\left(ii\right)$ and $\left(iv\right)$,
\begin{align*}
\mathcal{D}_y^{\,N-1}\left\{\frac{1}{1-\gamma(ux,vy,\,\theta)}\right\}&=\mathcal{D}_y^{\,N-1}\left\{1+\frac{\gamma}{p\,-\lambda ux}\,\frac{1}{1-\frac{\mu v}{p-\lambda ux}.y}\right\}
\\&=1+\frac{\gamma}{p\,-\lambda ux}\cdot\mathcal{D}_y^{\,N-1}\left\{\frac{1}{1-\frac{\mu v}{p-\lambda ux}\cdot y}\right\}
\\&=1+\frac{\gamma}{p-\lambda ux}\frac{1-\left(\frac{\mu v}{p-\lambda ux}\right)^N}{1-\frac{\mu v}{p-\lambda ux}}
\\&=1+\,\gamma\cdot\frac{1-\left[\frac{\mu
v}{p-\lambda ux}\right]^N}{p-\mu v-\lambda ux\,}
\\&=1+\frac{\gamma}{p-\mu v}\frac{1}{1-\frac{\lambda u}{p-\mu v\,}x}\left[1-\left(\frac{\mu v}{p}\frac{1}{1-\frac{\lambda u}{p\,}x}\right)^N\right]
\end{align*}
After some simple algebraic manipulation,
\begin{align*}
\mathcal{D}_y^{\,N-1}\left\{\frac{1}{1-\gamma(ux,vy,\theta)}\right\}=1+\frac{\gamma}{p-\mu v}\left[\frac{1}{1-bx}-\frac{1}{1-bx}\,C^N\frac{1}{(1-ax)^N}\right]
\end{align*}
Then
\begin{align*}
&\mathcal{D}_{xy}^{M-1,\,N-1}\left\{\frac{1}{1-\gamma(ux,vy,\theta)}\right\}
\\&=\mathcal{D}_x^{M-1}\left\{1+\frac{\gamma}{p-\mu v}\left[\frac{1}{1-bx}-C^N\frac{1}{1-bx}\,\frac{1}{(1-ax)^N}\right]\right\}
\end{align*}
From $\left(iv\right)$, and $\left(vi\right)$ with $b\neq 1$ as in case 1 of Theorem 2 $\left(vi\right)$,
\begin{align*}
&\mathcal{D}_{xy}^{M-1,\,N-1}\left\{\frac{1}{1-\gamma(ux,vy,\theta)}\right\}
\\&=1+\frac{\gamma}{p-\mu v}\left[\frac{1-b^M}{1-b}-\frac{1}{1-b}C^N\sum_{j=0}^{M-1}\binom{N+j-1}{j}\left[a^j-b^M\left(\frac{a}{b}\right)^j\right]\right]
\end{align*}
By (1.9),
\begin{align*}
\varPhi\left(u,v,\theta\right)&=1-[1-\gamma(u,v,\theta)]\mathcal{D}_{xy}^{M-1,\,N-1}\left\{\frac{1}{1-\gamma(ux,vy,\theta)}\right\}
\\&=1-\left[1-\gamma(u,v,\theta)\,\right]
\\&\hspace{1cm}\times\Bigg\{1+\frac{\gamma}{p-\mu v}\Bigg[\frac{1-b^M}{1-b}
\\&\hspace{2cm}-\frac{1}{1-b}C^N\sum_{j=0}^{M-1}\binom{N+j-1}{j}\left[a^j-b^M\left(\frac{a}{b}\right)^j\,\Bigg]\right]\Bigg\}.
\end{align*}
After some algebra and replacing $p$ with $\lambda+\mu+\theta$ and using notation (3.6)-(3.7), we find the required result,
\begin{align*}
\varPhi\left(u,v,\theta\right)&=\frac{\gamma}{\gamma+\lambda(1-u)+\mu(1-v)+\theta}
\\&\hspace{1cm}\times\left(1-\frac{\lambda(1-u)+\mu(1-v)+\theta}{\lambda+\mu(1-v)+\theta}\psi\right).
\end{align*}
\end{proof}

\begin{example}[The Marginal Transform of $\tau_\rho$]
Letting $u=v=1$ in $\varPhi\left(u,v,\theta\right)$ of Theorem 3,
\begin{align*}
E\left[e^{-\theta\tau_\rho}\right]&=\frac{\gamma}{\gamma+\theta}\left\{\left(\frac{\lambda}{\lambda+\theta}\right)^M+\left(\frac{\mu}{\lambda+\mu+\theta}\right)^N \right\}
\end{align*}
where
\begin{align*}
F=\sum_{j=0}^{M-1}\binom{N+j-1}{j}\left(\frac{\lambda}{\lambda+\mu+\theta}\right)^j\left[1-\left(\frac{\lambda}{\lambda+\theta}\right)^{M-j}\right]
\end{align*}\hfill{\qed}
\end{example}

\begin{example}[The Marginal Transform of $A_\rho$]
Letting $v=1$ and $\theta=0$ in $\varPhi\left(u,v,\theta\right)$ of Theorem 3,
\begin{align*}
E\left[u^{A_\rho}\right]=\frac{\gamma}{\gamma+\lambda\left(1-u\right)}\left[u^M+\left(\frac{\mu}{\mu+\lambda}\right)^N G\right]
\end{align*}
where
\begin{align*}
G=\sum_{j=0}^{M-1}\binom{N+j-1}{j}\left(\frac{\lambda u}{\lambda+\mu}\right)^j\left(1-u^{M-j}\,\right)
\end{align*}\hfill{\qed}
\end{example}

\begin{example}[The Marginal Transform of $B_\rho$]
Letting $u=1$ and $\theta=0$ in $\varPhi\left(u,v,\theta\right)$ of Theorem 3,
\begin{align*}
E\left[v^{B_\rho}\right]=\frac{\gamma}{\gamma+\mu\left(1-v\right)}\left[b^M+\left(\frac{\mu v}{\lambda+\mu}\right)^N H\right],
\end{align*}
where
\begin{align*}
H=\sum_{j=0}^{M-1}\binom{N+j-1}{j}\left(\frac{\lambda}{\lambda+\mu}\right)^j\left(1-b^{M-j}\right)
\end{align*}
and $b=\frac{\lambda}{\lambda+\mu\left(1-v\right)}$\hfill{\qedsymbol}
\end{example}

To find the probability density function of $\tau_\rho$ we make use of the following inverse Laplace transform formulas which can be readily proved.

\begin{lem}
Let $\alpha$, $\gamma$, $\lambda$ be some real fixed numbers. Then,
\begin{align*}
\mathcal{L}_\theta^{-1}\left(\frac{1}{\gamma+\theta}\frac{1}{\left(\lambda+\theta\right)^n}\right)\left(t\right)&=\frac{e^{-\gamma t}}{\left(\lambda-\gamma\right)^n}P\left(n-1,\left(\lambda-\gamma\right)t\right)
\end{align*}
where
\begin{align*}
P\left(n,\alpha t\right)=1-\frac{\Gamma\left(n,\alpha
t\right)}{\Gamma\left(n\right)}=1-e^{-\alpha t}\sum_{j=0}^n\frac{\left(\alpha
t\right)^j}{j!}
\end{align*}
is the regularized gamma function.
\end{lem}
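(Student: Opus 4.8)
The plan is to split the transform into a product of two elementary ones and recover the inverse by the convolution theorem, after which the answer reduces to an incomplete gamma integral. First I would record the two building-block inversions $\mathcal{L}_\theta^{-1}\!\left(\frac{1}{\gamma+\theta}\right)(t)=e^{-\gamma t}$ and $\mathcal{L}_\theta^{-1}\!\left(\frac{1}{(\lambda+\theta)^n}\right)(t)=\frac{t^{\,n-1}}{(n-1)!}\,e^{-\lambda t}$, both of which are standard. Since $\frac{1}{(\gamma+\theta)(\lambda+\theta)^n}$ is the product of these two transforms, the convolution theorem yields
\begin{align*}
\mathcal{L}_\theta^{-1}\!\left(\frac{1}{(\gamma+\theta)(\lambda+\theta)^n}\right)(t)
=\int_0^t e^{-\gamma(t-s)}\,\frac{s^{\,n-1}}{(n-1)!}\,e^{-\lambda s}\,ds.
\end{align*}

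Next I would pull $e^{-\gamma t}$ out of the integral by rewriting the exponent as $-\gamma t-(\lambda-\gamma)s$. Setting $\beta:=\lambda-\gamma$, what remains is $\frac{e^{-\gamma t}}{(n-1)!}\int_0^t s^{\,n-1}e^{-\beta s}\,ds$, and the substitution $u=\beta s$ converts this into
\begin{align*}
\frac{e^{-\gamma t}}{(n-1)!\,\beta^{\,n}}\int_0^{\beta t} u^{\,n-1}e^{-u}\,du
=\frac{e^{-\gamma t}}{(n-1)!\,\beta^{\,n}}\bigl[\Gamma(n)-\Gamma(n,\beta t)\bigr],
\end{align*}
since the integral is precisely the lower incomplete gamma function, i.e.\ the complement $\Gamma(n)-\Gamma(n,\beta t)$ of the upper one.

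Finally I would pass to the regularized form. Using $(n-1)!=\Gamma(n)$, the last display collapses to $\frac{e^{-\gamma t}}{\beta^{\,n}}\bigl[1-\Gamma(n,\beta t)/\Gamma(n)\bigr]$, and invoking the integer identity $\Gamma(n,x)/\Gamma(n)=e^{-x}\sum_{j=0}^{n-1}x^{\,j}/j!$ rewrites this as $\frac{e^{-\gamma t}}{(\lambda-\gamma)^n}P(n-1,(\lambda-\gamma)t)$, matching the claim. The only delicate point is the index bookkeeping: the incomplete-gamma-to-Poisson-sum identity truncates the series at $n-1$, which is exactly what produces $P(n-1,\cdot)$ rather than $P(n,\cdot)$ once the series definition of $P$ is used, so I would check that index shift against the defining sum for $P$ with care. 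A partial-fraction expansion of $\frac{1}{(\gamma+\theta)(\lambda+\theta)^n}$ into $\frac{A}{\gamma+\theta}+\sum_{k=1}^{n}\frac{B_k}{(\lambda+\theta)^k}$ with term-by-term inversion is an alternative, but it only reproduces the same closed form after resumming the $B_k$-series, so the convolution route is shorter.
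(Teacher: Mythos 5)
Your proof is correct, and it is more complete than the paper's own argument: the paper's proof consists of a single asserted identity (the closed form $e^{-\gamma t}(\lambda-\gamma)^{-n}\bigl[1-e^{(\gamma-\lambda)t}\sum_{i=0}^{n-1}\tfrac{(\lambda-\gamma)^i}{i!}t^i\bigr]$, quoted without derivation, presumably from a transform table or partial fractions) followed by the identification of the bracket with $P(n-1,(\lambda-\gamma)t)$. Your convolution route actually derives that bracket: the two elementary inversions, the convolution integral, the extraction of $e^{-\gamma t}$, and the substitution $u=(\lambda-\gamma)s$ reducing the integral to $\Gamma(n)-\Gamma(n,(\lambda-\gamma)t)$ are all standard and correct, and the final series identity $\Gamma(n,x)/\Gamma(n)=e^{-x}\sum_{j=0}^{n-1}x^j/j!$ lands on exactly the paper's intermediate expression. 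Your attention to the index bookkeeping is well placed: the lemma's stated definition of $P$ is internally inconsistent (the gamma-ratio form $1-\Gamma(n,\alpha t)/\Gamma(n)$ truncates the series at $j=n-1$, not $j=n$ as the displayed sum claims), and it is the series form, with sum up to the first argument, that the paper actually uses; under that reading your $1-\Gamma(n,\beta t)/\Gamma(n)$ is indeed $P(n-1,\beta t)$, matching the claim. The only point neither you nor the paper addresses is the degenerate case $\lambda=\gamma$, where the formula must be interpreted as a limit; a one-line remark excluding or handling that case would make the argument airtight.
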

\begin{proof}
\begin{align*}
\mathcal{L}_\theta^{-1}\left(\frac{1}{\gamma+\theta}\frac{1}{\left(\lambda+\theta\right)^n}\right)\left(t\right)&=e^{-\gamma t}\frac{1}{\left(\lambda-\gamma\right)^n}\left[1-e^{\left(\gamma-\lambda\right)t}\sum^{n-1}_{i=0}\frac{\left(\lambda-\gamma\right)^i}{i!}t^i\right]
\\&=\frac{e^{-\gamma t}}{\left(\lambda-\gamma\right)^n}P\left(n-1,\left(\lambda-\gamma\right)t\right).
\end{align*}
\end{proof}

\begin{lem}
Let $\alpha$, $\gamma$, $\lambda$ be some real fixed numbers. Then,
\begin{align*}
&\mathcal{L}_\theta^{-1}\left(\frac{1}{\gamma+\theta}\frac{1}{\left(\lambda+\theta\right)^m}\frac{1}{\left(\alpha+\theta\right)^n}\right)\left(t\right)
\\&=e^{-\gamma t}\left(-1\right)^n\sum^{m-1}_{k=0}\binom{n-k-1}{k}\frac{1}{\left(\lambda-\alpha\right)^{n+k}}
\\&\hspace{.25cm}\times\Bigg\{\frac{1}{\left(\lambda-\gamma\right)^{m-k}}P\left(m-k+1,\left(\lambda-\gamma\right)t\right)
\\&\hspace{.7cm}-\frac{1}{\left(\lambda-\alpha\right)^{m-k}}\sum^{n+k-1}_{i=0}\left(-1\right)^i\binom{m+i-k-1}{i}P\left(m+i-k-1,\left(\lambda-\alpha\right)t\right)\Bigg\}.
\end{align*}
\end{lem}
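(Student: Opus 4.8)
The plan is to reduce this three--pole inversion to the two--pole formula of the preceding lemma by a single partial--fraction step. The idea is to peel off the simple pole $\frac{1}{\gamma+\theta}$ and decompose the remaining product $\frac{1}{(\lambda+\theta)^m(\alpha+\theta)^n}$ of the two higher--order poles into a sum of pure powers, after which every summand has exactly the shape $\frac{1}{\gamma+\theta}\,\frac{1}{(\,\cdot\,+\theta)^r}$ that the preceding lemma inverts. An equivalent route is the convolution theorem, writing the answer as $e^{-\gamma t}*g(t)$ with $g=\mathcal{L}^{-1}_\theta\!\big(\tfrac{1}{(\lambda+\theta)^m(\alpha+\theta)^n}\big)$; this already explains the common factor $e^{-\gamma t}$ that stands outside the whole expression.

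First I would carry out the partial--fraction decomposition
\[
\frac{1}{(\lambda+\theta)^m(\alpha+\theta)^n}=\sum_{k=0}^{m-1}\frac{A_k}{(\lambda+\theta)^{m-k}}+\sum_{j=0}^{n-1}\frac{B_j}{(\alpha+\theta)^{n-j}},
\]
computing the residue coefficients from the standard formula $A_k=\frac{1}{k!}\left.\frac{d^k}{d\theta^k}(\alpha+\theta)^{-n}\right|_{\theta=-\lambda}$ and its $\alpha$--counterpart. Differentiating a single power produces the binomial factor, giving
\[
A_k=(-1)^n\binom{n+k-1}{k}\frac{1}{(\lambda-\alpha)^{n+k}},\qquad B_j=(-1)^j\binom{m+j-1}{j}\frac{1}{(\lambda-\alpha)^{m+j}} .
\]
This is exactly where the leading sign $(-1)^n$, the binomial coefficients, and the powers of $(\lambda-\alpha)$ appearing in the statement originate.

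Next I would multiply through by $\frac{1}{\gamma+\theta}$ and invert each summand using the preceding lemma with a simple pole at $\gamma$ and pole order $m-k$ (respectively $n-j$). Pulling out the common $e^{-\gamma t}$, each term contributes a regularized gamma function $P$ with the appropriate shifted argument together with a power of a parameter difference. Collecting everything yields a double sum of $P$--functions, which I would then reindex and regroup into the compact nested--sum form displayed in the statement; the inner sum over $i$ arises precisely at this repackaging stage, when the contributions indexed by the second family of residues are re--expressed relative to the $\lambda$--pole.

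The main obstacle is combinatorial bookkeeping rather than any analytic difficulty: one must keep the two families of residue coefficients, their signs, and the shifted arguments of the $P$--functions straight, and then reorganize the resulting double sum into a single nested sum without index or sign errors — this is the step where the exact shifts in the $P$--arguments must be pinned down carefully. A secondary point worth flagging is that the preceding lemma presumes the rates $\gamma,\lambda,\alpha$ to be pairwise distinct, so that none of $\frac{1}{\lambda-\alpha}$, $\frac{1}{\lambda-\gamma}$, $\frac{1}{\alpha-\gamma}$ blows up; the confluent cases would require the corresponding limiting (L'Hôpital) forms, and I would state and prove the result under this genericity assumption, matching the hypotheses already in force for the preceding lemma.
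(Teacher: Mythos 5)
Your route is genuinely different from the paper's. The paper's proof never invokes Lemma~4: it quotes ``well-known formulas for inverse Laplace transforms'' to write down the three-pole inversion in a single stroke and then merely recognizes the bracketed expressions as regularized gamma functions. Your plan --- peel off the simple pole at $-\gamma$, partial-fraction the product $(\lambda+\theta)^{-m}(\alpha+\theta)^{-n}$ into pure powers, and invert each summand by the preceding lemma --- is more elementary and self-contained, and your residue coefficients $A_k=(-1)^n\binom{n+k-1}{k}(\lambda-\alpha)^{-(n+k)}$ and $B_j=(-1)^j\binom{m+j-1}{j}(\lambda-\alpha)^{-(m+j)}$ are correct. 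Termwise application of the two-pole lemma then yields
\[
e^{-\gamma t}\Biggl[\sum_{k=0}^{m-1}\frac{A_k}{(\lambda-\gamma)^{m-k}}P\bigl(m-k-1,(\lambda-\gamma)t\bigr)+\sum_{j=0}^{n-1}\frac{B_j}{(\alpha-\gamma)^{n-j}}P\bigl(n-j-1,(\alpha-\gamma)t\bigr)\Biggr],
\]
which is already a perfectly serviceable closed form, and whose first family agrees with the first family of the displayed target up to two slips in the latter ($\binom{n-k-1}{k}$ where the algebra gives $\binom{n+k-1}{k}$, and $P(m-k+1,\cdot)$ where the paper's own definition of $P$ yields $P(m-k-1,\cdot)$).

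The gap is in the step you dismiss as ``combinatorial bookkeeping.'' The second family your method produces carries the argument $(\alpha-\gamma)t$ inside $P$ and powers of $(\alpha-\gamma)$ in the denominators, whereas the displayed target contains no $(\alpha-\gamma)$ at all: its inner sum has $P(\cdot,(\lambda-\alpha)t)$. These are different functions of $t$; after multiplying by the prefactor $e^{-\gamma t}$, the target contains exponentials $e^{-(\gamma+\lambda-\alpha)t}$, a rate that cannot occur in the inverse transform of a rational function whose only poles are $-\gamma$, $-\lambda$, $-\alpha$. (Test $m=n=1$: the true inverse is $\frac{e^{-\gamma t}}{(\lambda-\gamma)(\alpha-\gamma)}+\frac{e^{-\lambda t}}{(\gamma-\lambda)(\alpha-\lambda)}+\frac{e^{-\alpha t}}{(\gamma-\alpha)(\lambda-\alpha)}$, while the displayed formula retains an uncancelled $-e^{-(\gamma+\lambda-\alpha)t}/(\lambda-\alpha)^2$.) Since a finite reindexing of a sum of exponential-polynomial terms cannot change the exponential rates present, no amount of regrouping will land you on the stated expression; you would discover at the repackaging stage that the two sides disagree. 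The defect lies in the statement (and in the paper's proof, which inherits the same slips), not in your method: your decomposition is the right way to prove a corrected version of the lemma. Your closing caveat that $\gamma$, $\lambda$, $\alpha$ must be pairwise distinct is well taken and should be added as an explicit hypothesis, since otherwise the residue coefficients and the confluent limits change form.
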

\begin{proof}
By well-known formulas for inverse Laplace transforms,
\begin{align*}
&\mathcal{L}_\theta^{-1}\left(\frac{1}{\gamma+\theta}\frac{1}{\left(\lambda+\theta\right)^m}\frac{1}{\left(\alpha+\theta\right)^n}\right)\left(t\right)
\\&=g\left(\alpha,\lambda\right)\Bigg\{\frac{1}{\left(\lambda-\gamma\right)^{m-k}}\Bigg[e^{-\gamma t}-e^{-\lambda t}\sum^{m-k-1}_{r=0}\frac{\left(\lambda-\gamma\right)^r}{r!}t^r\Bigg]
\\&\hspace{2cm}-\sum^{n+k-1}_{i=0}\left(\alpha-\lambda\right)^i\Bigg[\binom{m+i-k-1}{i}\frac{1}{\left(\lambda-\alpha\right)^{m+i-k}}
\\&\hspace{3cm}\times\left(e^{-\gamma t}-e^{\left(\alpha-\lambda-\gamma\right)t}\sum^{m+i-k-1}_{s=0}\frac{\left(\lambda-\alpha\right)^s}{s!}t^s\right)\Bigg]\Bigg\}
\end{align*}
where 
\begin{align*}
g\left(\alpha,\lambda\right)&=\frac{1}{\left(n-1\right)!\left(\alpha-\lambda\right)^n}\sum^{m-1}_{k=0}\frac{\left(n+k-1\right)!}{k!}\frac{1}{\left(\lambda-\alpha\right)^k}
\\&=\left(-1\right)^n\sum^{m-1}_{k=0}\binom{n-k-1}{k}\frac{1}{\left(\lambda-\alpha\right)^{n+k}}.
\end{align*}
Next, we see this is equivalent to
\begin{align*}
&e^{-\gamma t}g\left(\alpha,\lambda\right)\Bigg\{\frac{1}{\left(\lambda-\gamma\right)^{m-k}}\Bigg[1-e^{-\left(\lambda-\gamma\right)t}\sum^{m-k-1}_{r=0}\frac{\left(\lambda-\gamma\right)^r}{r!}t^r\Bigg]
\\&\hspace{2cm}-\frac{1}{\left(\lambda-\alpha\right)^{m-k}}\sum^{n+k-1}_{i=0}\left(-1\right)^i\binom{m+i-k-1}{i}
\\&\hspace{5cm}\times\left(1-e^{-\left(\lambda-\alpha\right)t}\sum^{m+i-k-1}_{s=0}\frac{\left(\lambda-\alpha\right)^s}{s!}t^s\right)\Bigg\}
\\&=e^{-\gamma t}\left(-1\right)^n\sum^{m-1}_{k=0}\binom{n-k-1}{k}\frac{1}{\left(\lambda-\alpha\right)^{n+k}}
\\&\hspace{2cm}\times\biggl\{\frac{1}{\left(\lambda-\gamma\right)^{m-k}}P\left(m-k+1,\left(\lambda-\gamma\right)t\right)
\\&\hspace{3cm}-\frac{1}{\left(\lambda-\alpha\right)^{m-k}}\sum^{n+k-1}_{i=0}\left(-1\right)^i\binom{m+i-k-1}{i}
\\&\hspace{6cm}\times P\left(m+i-k-1,\left(\lambda-\alpha\right)t\right)\biggr\}
\end{align*}
\end{proof}

\begin{example}[The Probability Density Function of $\tau_\rho$]
Revisiting Example 1 we go further to obtain the probability density function of the first observed passage time of the game end. Since $Ee^{-\theta\tau_\rho}$ is the Laplace-Stieltjes transform, we need to divide it by $\theta$ and then take the Laplace inverse to obtain the density function, $f_{\tau_\rho}$,
\begin{align*}
f_{\tau_\rho}(t)&=\mathcal{L}_\theta^{-1}\left\{E\left[e^{-\theta\tau_\rho}\right]\right\}(t)
\\&=\mathcal{L}_\theta^{-1}\Biggl\{\frac{\gamma\lambda^M}{(\gamma+\,\theta)(\lambda+\,\theta)^M}\,+\,\,\sum_{j=0}^{M-1}\binom{N+j-1}{j}\frac{\gamma\lambda^j\mu^N}{(\gamma+\,\theta)(\lambda+\mu+\theta)^{N+j}}
\\&\hspace{1.5cm}-\sum_{j=0}^{M-1}\binom{N+j-1}{j}\frac{\gamma\lambda^M\mu^N}{(\gamma+\,\theta)(\lambda+\,\theta)^{M-j}(\lambda+\mu+\,\theta)^{N+j}}\Biggr\}\left(t\right).
\end{align*}
By Lemma 4 and Lemma 5 and after some algebra, we have
\begin{align*}
f_{\tau_\rho}(t)&=\frac{\gamma\lambda^Me^{-\gamma t}}{\left(\lambda-\gamma\right)^M}P\left(M-1,\lambda-\gamma\right)
\\&+\gamma\mu^Ne^{-\gamma t}\sum_{j=0}^{M-1}\binom{N+j-1}{j}\frac{\lambda^j}{\left(\lambda+\mu-\gamma\right)^{N+j}}P\left(N+j-1,\lambda+\mu-\gamma\right)
\\&-\gamma\mu^Ne^{-\gamma t}\sum_{j=0}^{M-1}\binom{N+j-1}{j}\lambda^{M-N-j}\sum_{k=0}^{M-j-1}\binom{N+j-k-1}{k}\frac{\left(-1\right)^k}{\lambda^k}
\\&\hspace{.5cm}\times\biggl\{\frac{1}{\left(\lambda-\gamma\right)^{M-j-k}}P\left(M-j-k+1,\left(\lambda-\gamma\right)t\right)-\frac{\left(-1\right)^{M-j-k}}{\mu^{M-j-k}}
\\&\hspace{1cm}\times\hspace{-.25cm}\sum_{l=0}^{N+j+k-1}\hspace{-.25cm}\left(-1\right)^l\binom{M-j+l-k-1}{l}P\left(M-j+l-k-1,-\mu t\right)\biggr\}.
\end{align*}
\end{example}

\bibliographystyle{plainnat}
\bibliography{DOCDSG}
\end{document}